\newtheorem*{rep@theorem}{\rep@title}
\newcommand{\newreptheorem}[2]{%
\newenvironment{rep#1}[1]{%
 \def\rep@title{#2 \ref{##1}}%
 \begin{rep@theorem}}%
 {\end{rep@theorem}}}
\newtheorem{theorem}{Theorem}[section]
\newtheorem{lemma}{Lemma}[section]
\theoremstyle{definition}
\newtheorem{mydef}{Definition}[section]
\newtheorem{rem}{Remark}[section]
\title{Semisimple pointed isogeny graphs for abelian varieties}
\author{Paul Alexander Helminck}
\affil{University of Bremen,\\
ALTA, Institute for Algebra, Geometry, Topology and their Applications}
\begin{document}
\maketitle

\definecolor{qqqqff}{rgb}{0,0,1}

\begin{abstract}
In this paper we show that if $\phi_{i}:A_{i}\rightarrow{A}$ is a semisimple pointed $K$-rational $\ell$-isogeny graph of order $n$ for a prime $\ell$, then the group of $\ell$-torsion points $A[\ell](\overline{K})$ contains a subspace of dimension $n$ generated by $K$-rational points. 
We also show that the same result is true for elliptic curves without the semisimplicity condition. Furthermore, we give an explicit counterexample for abelian varieties of higher dimension to show that the semisimplicity condition is indeed necessary.  
\end{abstract}

\section{Introduction}

Let $\ell$ be a prime. Let $A$ be an abelian variety over a field $K$ of characteristic $p\neq{\ell}$ and let $G=\mathrm{Gal}(K(A[\ell])/K)$. 
Suppose we have $n$ isogenies $\phi_{i}:A_{i}\rightarrow{A}$ over $K$, whose kernels are generated by $K$-rational points of order $\ell$. We refer to such a collection of isogenies as a $K$-rational pointed $\ell$-isogeny graph. If the corresponding injections of function fields $K(A)\rightarrow{K(A_{i})}$ are linearly disjoint, we say that the pointed isogeny graph is of order $n$. 

Elliptic curves give rise to the simplest examples of nontrivial pointed isogeny graphs. Here we have the following theorem: 

\begin{reptheorem}{MainTheorem1}
Let
\begin{center}
\begin{tikzcd}[every arrow/.append style={dash}]
E_{1} \ar[rightarrow]{r}{\phi_{1}} & E \ar[leftarrow]{r}{\phi_{2}} & E_{2} 
\end{tikzcd}
\end{center}
be a $K$-rational pointed $\ell$-isogeny graph of order two. 
Then $E[\ell](\overline{K})\simeq{(\mathbb{Z}/\ell\mathbb{Z})^{2}}$ as $G_{K}$-modules. 
\end{reptheorem}
For abelian varieties of dimension $g>1$, the situation is slightly more complicated. Here we will assume that the representation 
\begin{equation}
\rho_{\ell}:G_{K}\rightarrow{\mathrm{GL}_{2g}(\mathbb{F}_{\ell})}
\end{equation}
is \emph{semisimple}, in the sense that $G_{K}$-invariant subspaces admit $G_{K}$-invariant complements. This condition is satisfied for instance when $\ell\nmid{|G|}$, where $G=G_{K}/\mathrm{ker}(\rho_{\ell})$.  
In the semisimple case, we have the following theorem:
\begin{reptheorem}{MainTheorem2}
Let $\phi_{i}:A_{i}\rightarrow{A}$ be a \emph{semisimple} pointed $K$-rational $\ell$-isogeny graph of order $n$. 
There then exists an injection of $G_{K}$-modules
\begin{equation}
(\mathbb{Z}/\ell\mathbb{Z})^{n}\rightarrow{A[\ell]}(\overline{K}).
\end{equation}
\end{reptheorem}
Here $(\mathbb{Z}/\ell\mathbb{Z})^{n}$ has the trivial $G_{K}$-action.
The semisimplicity condition is indeed necessary as we will see in Section \ref{AbelianVarieties}, where we give an explicit counterexample over $\mathbb{Q}$ using products of elliptic curves. 

The problem arose from considering descent problems and their applications to finding number fields $K$ with a class group having a high $\ell$-part (see \cite{Paul3}).  
Given a $K$-rational point on $A$, one can pull this point back through all $n$ isogenies. This gives $n$ extensions and by class field theory one obtains a part of a generalized class group. Imposing some extra conditions can make most of these extensions unramified everywhere, which then gives subgroups of the ordinary class group. 

The paper is structured as follows: we begin by introducing the necessary concepts in Section \ref{Preliminaries}. After this, we will consider the case of elliptic curves in Section \ref{EllipticCurves} and prove Theorem \ref{MainTheorem1}. In Section \ref{AbelianVarieties}, we consider abelian varieties of dimension greater than one and prove Theorem \ref{MainTheorem2}. We will then give a counterexample to show that the semisimplicity condition in Theorem \ref{MainTheorem2} is necessary. 

\section{Preliminaries}\label{Preliminaries}

Let $\ell$ be a prime and let $K$ be a field of characteristic $p\neq{\ell}$. Let $K^{s}$ be a separable closure of $K$ with Galois group $G_{K}:=\mathrm{Gal}(K^{s}/K)$. We refer the reader to \cite{Milne1}, \cite{GeerMoon} and \cite{Mumf1} for an introduction to abelian varieties. We will give a quick summary of the facts needed here. 

An abelian variety over $K$ is an algebraic group $A$ that is geometrically integral and proper over $K$. Such a variety $A$ is always commutative and projective, see \cite[Corollary 1.4 and Theorem 6.4]{Milne1}. For any $n\in\mathbb{Z}_{>{0}}$, we let $A[n]$ be the kernel of the multiplication map $[n]:A\rightarrow{A}$. We then have
\begin{lemma}\label{SeparableLemma}
Let $A$ be an abelian variety of dimension $g$ over $K$. If $p\nmid{n}$, then $A[n]$ is \'{e}tale over $K$ and $A[n](K^{s})\simeq{(\mathbb{Z}/n\mathbb{Z})^{2g}}$.
\end{lemma}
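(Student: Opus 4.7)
The plan is to first establish étaleness of the multiplication-by-$n$ map $[n]:A\rightarrow A$ by computing its differential at the origin, then to pin down the cardinality of $A[n](K^{s})$, and finally to identify the exact isomorphism type using the structure theorem for finite abelian groups together with the corresponding count for each prime $\ell\mid n$.

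For the first step, I would use that $\mathrm{Lie}(A)=T_{e}A$ is a $K$-vector space of dimension $g$, and that for a group variety the differential of $[n]$ at the identity is multiplication by $n$ on $\mathrm{Lie}(A)$ (this follows from additivity of $d(-)_{e}$ applied to the decomposition $[n]=\mathrm{id}+\cdots+\mathrm{id}$ in the endomorphism ring). When $p\nmid n$, this map is invertible, so $[n]$ is étale at $e$. Because $[n]$ is a homomorphism of group schemes, translation by elements of $A$ shows that $[n]$ is étale everywhere, hence $A[n]=\ker[n]$ is étale over $K$.

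For the second step, I would invoke the standard result that $[n]:A\rightarrow A$ is a finite flat morphism of degree $n^{2g}$ (\cite{Mumf1}, proved via the theorem of the cube and intersection theory). Combined with étaleness, this gives $|A[n](K^{s})|=n^{2g}$.

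For the final step, I would write $A[n](K^{s})\cong\bigoplus_{j=1}^{k}\mathbb{Z}/d_{j}\mathbb{Z}$ with $d_{1}\mid d_{2}\mid\cdots\mid d_{k}\mid n$, so $\prod_{j}d_{j}=n^{2g}$. For each prime $\ell\mid n$, the same étale argument applied to $[\ell]$ gives $|A[\ell](K^{s})|=\ell^{2g}$; on the other hand $A[\ell](K^{s})=\bigoplus_{j}(\mathbb{Z}/d_{j}\mathbb{Z})[\ell]$, where each summand contributes a factor of $\ell$ precisely when $\ell\mid d_{j}$. Hence exactly $2g$ of the $d_{j}$ are divisible by $\ell$. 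Taking this over all primes $\ell\mid n$ forces $k=2g$ and $n\mid d_{j}$ for every $j$; combined with $d_{j}\mid n$ and $\prod d_{j}=n^{2g}$, this yields $d_{j}=n$ for every $j$, i.e., $A[n](K^{s})\cong(\mathbb{Z}/n\mathbb{Z})^{2g}$.

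The technical heart of the argument is the input $\deg[n]=n^{2g}$, which is a nontrivial theorem about abelian varieties and would be cited from the references rather than proved here; by contrast, the differential computation and the structure-theorem endgame are routine once that input is available.
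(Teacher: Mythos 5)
Your argument is correct, but it takes a substantially different (and more self-contained) route than the paper does. The paper simply cites \cite[Chapter 7, Theorem 4.38]{liu2} and adds the observation that \'{e}taleness of $A[n]$ forces the geometric points to lie in $K^{s}$; it does not reprove anything. You instead reconstruct the proof from a lower-level input, namely Mumford's theorem that $[n]$ is finite flat of degree $n^{2g}$: the differential computation $d[n]_{e}=n\cdot\mathrm{id}$ plus homogeneity gives \'{e}taleness, the degree formula gives $|A[n](K^{s})|=n^{2g}$, and the invariant-factor bookkeeping pins down the group structure. That bookkeeping is slightly compressed at the step ``taking this over all primes $\ell\mid n$ forces $k=2g$ and $n\mid d_{j}$'': from ``exactly $2g$ of the $d_{j}$ are divisible by $\ell$'' one first gets $\sum_{j}v_{\ell}(d_{j})\le 2g\,v_{\ell}(n)$ (using $d_{j}\mid n$), and only by comparing with $\sum_{j}v_{\ell}(d_{j})=v_{\ell}(n^{2g})=2g\,v_{\ell}(n)$ does one conclude that each of those $2g$ factors has full $\ell$-valuation $v_{\ell}(n)$, whence $d_{j}=n$ for the top $2g$ invariant factors and $d_{j}=1$ otherwise. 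With that small expansion your proof is complete. The trade-off is the usual one: the paper's one-line citation is efficient and defers all the work to a standard reference, while your version makes the dependence explicit (only the degree formula is cited) and is reusable if a reader wants to see where \'{e}taleness actually comes from.
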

\begin{proof}
See \cite[Chapter 7, Theorem 4.38]{liu2}. Note that the fact that $A[n]$ is \'{e}tale over $K$ implies that the geometric points $A[n](\overline{K})$ are defined over $K^{s}$.
\end{proof}
We now specialize to a prime $n=\ell$ and we will use the notation $A[\ell](\overline{K})$, which is equal to $A[\ell](K^{s})$ by Lemma \ref{SeparableLemma}. The group $A[\ell](\overline{K})$ is naturally an $\mathbb{F}_{\ell}$-vector space of dimension $2g$ and for any subspace $V\subset{A[\ell](\overline{K})}$, we denote its dimension by $\mathrm{dim}(V)$.



An isogeny of abelian varieties is a homomorphism $\phi:A\rightarrow{A'}$ such that $\phi$ is finite and surjective. We define the \emph{degree} of $\phi$ by $\mathrm{deg}(\phi):=|(\ker{\phi})(\overline{K})|$. There is then a \emph{dual} isogeny $\hat{\phi}:A'\rightarrow{A}$ such that $\hat{\phi}\circ{\phi}=[m]_{A}$. We will only be interested in separable isogenies in this paper. That is, isogenies for which the injection of function fields $K(A')\rightarrow{K(A)}$ is a separable field extension. 
%
For any $K$-rational point $P\in{A[\ell](K)}$, we consider the corresponding isogeny $\phi:A\rightarrow{A/<P>}$ over $K$. 
Let us write $A'=A/<P>$. Then $\phi$ induces a finite injection of function fields $K(A')\rightarrow{K(A)}$ of degree $\ell$.
\begin{mydef}\label{PointedIsogenyGraph}
Let $A_{i}/K$ be a finite collection of abelian varieties over $K$ with $K$-rational points $P_{i}\in{A_{i}[\ell](K)}$ and assume that they induce isogenies to a single abelian variety $A$. That is, we have isogenies $\phi_{i}:A_{i}\rightarrow{A}$. We then say that they form a pointed $K$-rational $\ell$-isogeny graph, or just: pointed isogeny graph. 
\end{mydef}
\begin{mydef}\label{OrderIsogenyGraph}
Let $\phi_{i}:A_{i}\rightarrow{A}$ be a set of $n$ isogenies forming a pointed $K$-rational $\ell$-isogeny graph. We say that the pointed isogeny graph is of order $n$ if the field extensions
\begin{equation}
K(A)\rightarrow{K(A_{i})}
\end{equation} 
are linearly disjoint.
\end{mydef}
\begin{rem}
We have the following equivalent conditions for a pointed isogeny graph to be of order $n$: 
\begin{enumerate}
\item The field $K_{n}:=\prod_{i=1}^{n}K(A_{i})$ has dimension $\ell^{n}$ as a vector space over $K(A)$.
\item The induced map $\bigotimes_{i=1}^{n}K(A_{i})\rightarrow{K_{n}}$ is an injection.
\end{enumerate}
\end{rem}

We have a natural action of $G_{K}$ on $A[\ell](\overline{K})$. This gives a representation
\begin{equation}
\rho_{\ell}:G_{K}\rightarrow{\mathrm{GL}_{2g}(\mathbb{F}_{\ell})}.
\end{equation}
We say that $\rho_{\ell}$ is \emph{semisimple} if every $G_{K}$-invariant subspace $V\subset{\mathbb{F}_{\ell}^{2g}}=A[\ell](\overline{K})$ has a $G_{K}$-invariant complement. That is, there exists a $G_{K}$-invariant subspace $V'$ such that $V\oplus{V'}=\mathbb{F}_{\ell}^{2g}$. Here $G_{K}$ acts through the representation $\rho_{\ell}$.  
\begin{mydef}
A pointed isogeny graph $\phi_{i}:A_{i}\rightarrow{A}$ is said to be \emph{semisimple} if the representation $\rho_{\ell}$ is semisimple. 
\end{mydef}
\begin{rem}
The representation is semisimple for example when $\ell\nmid{|G|}$, where $G=\mathrm{Gal}(K(A[\ell])/K)$, see \cite[Chapter 1, Theorem 1]{Serre1977}.
\end{rem}

\newpage
\section{Pointed isogeny graphs for elliptic curves}\label{EllipticCurves}

Let $E/K$ be an elliptic curve over a field $K$ of characteristic $p\neq{\ell}$. We will prove the following 
\begin{theorem}\label{MainTheorem1}
Let
\begin{center}
\begin{tikzcd}[every arrow/.append style={dash}]
E_{1} \ar[rightarrow]{r}{\phi_{1}} & E \ar[leftarrow]{r}{\phi_{2}} & E_{2} 
\end{tikzcd}
\end{center}
be a $K$-rational pointed $\ell$-isogeny graph of order two. 
Then $E[\ell](\overline{K})\simeq{(\mathbb{Z}/\ell\mathbb{Z})^{2}}$ as $G_{K}$-modules. 
\end{theorem}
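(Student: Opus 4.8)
The plan is to exploit the two isogenies $\phi_1\colon E_1\to E$ and $\phi_2\colon E_2\to E$ together with their duals to produce two independent $K$-rational lines in $E[\ell](\overline K)$, and then to invoke linear disjointness (the order-two hypothesis) to see that these lines are actually distinct. First I would set $C_i=\ker(\phi_i)\subset E_i[\ell]$, which by hypothesis is generated by a $K$-rational point $P_i$, so $C_i\simeq \mathbb{Z}/\ell\mathbb{Z}$ with trivial $G_K$-action. Passing to the dual isogeny $\hat\phi_i\colon E\to E_i$, one has $\hat\phi_i\circ\phi_i=[\ell]_{E_i}$ and $\phi_i\circ\hat\phi_i=[\ell]_E$; hence $\ker(\hat\phi_i)=\phi_i(E_i[\ell])$ is an order-$\ell$ subgroup $D_i\subset E[\ell](\overline K)$. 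Because $\phi_i$ is defined over $K$ and $E_i[\ell]$ is a $G_K$-module, $D_i$ is a $G_K$-stable line. The first point to nail down is that $D_i$ is in fact a \emph{trivial} $G_K$-module, i.e.\ generated by a $K$-rational point: indeed $D_i=\phi_i(E_i[\ell])$ and $E_i[\ell]/C_i \xrightarrow{\ \phi_i\ } D_i$ is a $G_K$-isomorphism of $1$-dimensional spaces, and $E_i[\ell]/C_i$ carries the quotient action. This is not automatically trivial, so instead I would argue via the Weil pairing: $E_i[\ell]$ has a $G_K$-stable line $C_i$ on which $G_K$ acts trivially, and under the Weil pairing the quotient $E_i[\ell]/C_i$ is identified with $\mathrm{Hom}(C_i,\mu_\ell)\simeq\mu_\ell$ only when $\det\rho_\ell$ is the cyclotomic character — which holds for elliptic curves. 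So $D_i\simeq\mu_\ell$ as a $G_K$-module; but symmetrically, applying the same reasoning to $\hat\phi_i$ shows $\ker(\phi_i)=C_i$ sits inside $E_i[\ell]$ with $E_i[\ell]/C_i\simeq\mu_\ell$, forcing the image of $E[\ell]$ under something\dots hold on — the cleanest route is:

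The cleanest route is to observe that $E$ receives an isogeny from $E_1$ with kernel a $K$-rational point, hence (dualizing) $E$ \emph{admits} a $K$-rational point of order $\ell$ in $E[\ell]$ precisely when $E_1[\ell]/C_1$ is trivial, which need not happen. So rather than chase triviality directly, I would instead run the argument on $E_1$ and $E_2$ themselves and transport. Concretely: $C_1\subset E_1[\ell]$ is a trivial $G_K$-line, and $\hat\phi_1\colon E\to E_1$ has kernel $\phi_1(E_1[\ell])\subset E_1[\ell]$ only after composing\dots I think the correct and simplest bookkeeping is this. Let $V=E[\ell](\overline K)$, a $2$-dimensional $\mathbb{F}_\ell$-space. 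The isogeny $\phi_1$ identifies $V\simeq E_1[\ell]/C_1'$ for a suitable line — no: $\phi_1$ is degree $\ell$, so $\phi_1$ maps $E_1[\ell^2]$ onto $E[\ell]$ with kernel of order $\ell^2$; more usefully $\hat\phi_1\colon E\to E_1$ has $\ker\hat\phi_1 = \phi_1(E_1[\ell])$, a trivial-or-not $G_K$-line $L_1\subset V$, and $V/L_1 \simeq C_1$ as $G_K$-modules (since $E_1[\ell] \xrightarrow{\phi_1} V$ has kernel $C_1$ and image with the stated quotient). As $C_1$ is the trivial module, $V/L_1$ is trivial, so $V$ has the trivial module as a $1$-dimensional quotient. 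Symmetrically $V/L_2$ is trivial. Now I claim $L_1\neq L_2$: if $L_1=L_2=:L$, then $\phi_1$ and $\phi_2$ would both factor as $E_i\to E/L \hookrightarrow$\dots — more precisely both $\hat\phi_i$ would have the same kernel $L$, so $E/L$ is a common quotient, and one checks the compositions $E_i \to E \to E/L$ identify $K(E/L)$ inside both $K(E_i)$ in a way incompatible with $K(E_1)$ and $K(E_2)$ being linearly disjoint over $K(E)$ — this is exactly where the order-two hypothesis is used. Granting $L_1\neq L_2$, the two trivial quotients $V\twoheadrightarrow V/L_1$ and $V\twoheadrightarrow V/L_2$ have intersecting kernels $L_1\cap L_2=0$, so the diagonal map $V\to V/L_1\oplus V/L_2$ is an injective $G_K$-map between $2$-dimensional spaces, hence an isomorphism, and the target is the trivial module $(\mathbb Z/\ell\mathbb Z)^2$. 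Therefore $E[\ell](\overline K)\simeq(\mathbb Z/\ell\mathbb Z)^2$ as $G_K$-modules.

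The steps in order: (1) define $C_i=\ker\phi_i$, note it is the trivial $G_K$-module; (2) set $L_i=\ker\hat\phi_i=\phi_i(E_i[\ell])\subset V$ and establish the $G_K$-isomorphism $V/L_i\simeq C_i$, so $V/L_i$ is trivial; (3) prove $L_1\neq L_2$ using linear disjointness of $K(E_1)$ and $K(E_2)$ over $K(E)$ — translating "same kernel line" into "shared intermediate extension"; (4) conclude via the diagonal map $V\hookrightarrow V/L_1\oplus V/L_2$ being a $G_K$-isomorphism onto the trivial module. The main obstacle is step (3): making precise why equality of the kernel lines $L_1=L_2$ contradicts linear disjointness. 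I expect to argue that $L_1=L_2$ forces a common degree-$\ell$ subextension, equivalently $\hat\phi_1$ and $\hat\phi_2$ factor through a common isogeny $E\to E'$, and then the two function-field towers $K(E)\subset K(E_i)$ share this subfield $K(E')\neq K(E)$, which is impossible when the compositum has degree $\ell^2$ over $K(E)$. A secondary subtlety worth checking is that $L_i$ really is $G_K$-stable and that the identification $V/L_i\simeq C_i$ is $G_K$-equivariant — both follow because all isogenies and the subgroup schemes involved are defined over $K$, so the induced maps on $\ell$-torsion commute with the $G_K$-action.
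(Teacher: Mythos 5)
Your final argument is correct, and it follows essentially the same strategy as the paper's proof: in both cases one sets $L_i=\ker\hat\phi_i$ (the paper calls it $H_i=\langle R_i\rangle$), uses the linear-disjointness hypothesis to show $L_1\neq L_2$ (hence $L_1\cap L_2=(1)$), and exploits the dual isogenies to transfer the $K$-rationality of $\ker\phi_i$ into a statement about $E[\ell]$.

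The packaging at the end differs slightly. The paper fixes a basis $\{P_i,Q_i\}$ of $E_i[\ell]$ with $\ker\phi_i=\langle P_i\rangle$, sets $R_i=\phi_i(Q_i)$, and proves $R_i$ is $K$-rational directly: $\hat\phi_j(R_i)\in\ker\phi_j$ is fixed by $G_K$, so $\sigma R_i-R_i\in H_j$; also $\sigma R_i-R_i\in H_i$ since $H_i$ is $G_K$-stable; hence $\sigma R_i-R_i\in H_1\cap H_2=(1)$. You instead observe that $V/L_i\cong\ker\phi_i$ is a trivial module and embed $V$ diagonally into $V/L_1\oplus V/L_2$. The conclusions are equivalent (once $L_1\cap L_2=(1)$ is known, $L_1\cong V/L_2$ and $L_2\cong V/L_1$ are both trivial), but your diagonal-map phrasing is a bit cleaner as it avoids the explicit basis choice; the paper's version is more explicit and is the one that generalizes directly to the semisimple higher-dimensional case (where Maschke's theorem is needed to supply the complements that in dimension $2$ come for free).

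Two small remarks on the write-up. First, the lengthy Weil-pairing detour in the middle (trying to show $L_i\simeq\mu_\ell$) was a wrong turn, as you yourself noticed — it would only give triviality when $\mu_\ell\subset K$. Second, the parenthetical justification of $V/L_i\simeq C_i$ is slightly off: the map $E_i[\ell]\xrightarrow{\phi_i}V$ has kernel $C_i$ and image $L_i$, which gives $E_i[\ell]/C_i\cong L_i$, not $V/L_i\cong C_i$. The correct justification is the one going the other way: $\hat\phi_i$ restricted to $V=E[\ell]$ has kernel $L_i$ and image contained in $\ker\phi_i=C_i$ (since $\phi_i\circ\hat\phi_i=[\ell]_E$), and a cardinality count shows the image is all of $C_i$; this is the $G_K$-isomorphism $V/L_i\cong C_i$. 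With that fix, the argument is complete.
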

\begin{proof}
Let $\{P_{1},Q_{1}\}$ be a basis for $E_{1}[\ell](\overline{K})$ and $\{P_{2},Q_{2}\}$ a basis for $E_{2}[\ell](\overline{K})$ such that $\mathrm{ker}(\phi_{i})=<P_{i}>$. We claim that $R_{1}:=\phi_{1}(Q_{1})$ and $R_{2}:=\phi_{2}(Q_{2})$ are linearly independent and $K$-rational. Write $H_{i}=<R_{i}>$. 
\begin{lemma}\label{IndependentLemma1}
\begin{equation}
H_{1}\neq{H_{2}}.
\end{equation}
\end{lemma}
\begin{proof}
Let $H=H_{1}\cap{H_{2}}$. Since $H\subset{H_{i}}$, we have induced morphisms
\begin{equation}
E\rightarrow{E/H}\rightarrow{E/H_{i}}\rightarrow{E},
\end{equation}
where the last arrow is given by $\phi_{i}$. This gives rise to injections of function fields
\begin{equation}
K(E)\rightarrow{K(E_{i})}\rightarrow{K(E/H)}\rightarrow{K(E)}.
\end{equation}
By assumption, we have that $K(E_{1})\cdot{K(E_{2})}$ has dimension $\ell^{2}$ over $K(E)$. In other words, the last injection is an isomorphism and thus $H=(1)$, giving the Lemma.
\end{proof}

\begin{lemma}
The $R_{i}$ are $K$-rational.
\end{lemma}
\begin{proof}
Let $\sigma\in{G_{K}}$. Consider $R'_{1}:=\hat{\phi}_{2}(R_{1})$. Then $R'_{1}\in\mathrm{ker}(\phi_{2})$, meaning that it is $K$-rational. This gives $\sigma(R'_{1})=R'_{1}$ and thus $\sigma(R_{1})-R_{1}\in{H_{2}}$. But $H_{1}=<R_{1}>$ is $G_{K}$-invariant (as it is the kernel of $\hat{\phi}_{1}$) and thus $\sigma(R_{1})-R_{1}\in{H_{1}\cap{H_{2}}}=(1)$. Similarly, we have $\sigma(R_{2})=R_{2}$, yielding the Lemma.
\end{proof}
In other words, we now have two $K$-rational points $R_{i}\in{E[\ell](K)}$ which are linearly independent by Lemma \ref{IndependentLemma1}. This gives the theorem. 

\end{proof}

\section{Semisimple pointed isogeny graphs}\label{AbelianVarieties}

We now turn to pointed $K$-rational $\ell$-isogeny graphs for abelian varieties of dimension $g>1$. These isogeny graphs $A_{i}\rightarrow{A}$ can exist without an injection of $G_{K}$-modules  $(\mathbb{Z}/\ell\mathbb{Z})^{n}\rightarrow{A[\ell](\overline{K})}$, as we will see after Theorem \ref{MainTheorem2}. If the representation 
\begin{equation}
\rho_{\ell}:G_{K}\rightarrow{\mathrm{GL}_{2g}(\mathbb{F}_{\ell})}
\end{equation}
is \emph{semisimple} however, we have the following theorem: 

\begin{theorem}\label{MainTheorem2}
Let $A$ be an abelian variety of dimension $g>1$ and let $A_{i}\rightarrow{A}$ be a semisimple $K$-rational $\ell$-isogeny graph of order $n$. There then exists an injection
\begin{equation}
(\mathbb{Z}/\ell\mathbb{Z})^{n}\rightarrow{A[\ell](\overline{K})}
\end{equation}
of $G_{K}$-modules.
\end{theorem}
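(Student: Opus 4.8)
The plan is to produce the required trivial $G_K$-module first as a \emph{quotient} of $A[\ell](\overline K)$ by means of a suitable isogeny, and then to use the semisimplicity hypothesis to split it off as a \emph{submodule}. To this end I would form the fibre product $\bar A:=A_1\times_A A_2\times_A\cdots\times_A A_n$, taken along the isogenies $\phi_i$, with its projections $\rho_i\colon\bar A\to A_i$ and structure map $q:=\phi_i\circ\rho_i\colon\bar A\to A$. Since $\ell\neq p$ each $\phi_i$ is finite étale, hence so is $\bar A\to A$; moreover $\bar A$ is a commutative group scheme over $K$ carrying the $K$-rational identity section, hence (being smooth, proper and geometrically connected) an abelian variety over $K$, and $q$ is an isogeny. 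Its degree is the dimension over $K(A)$ of the generic fibre $\bigotimes_{i=1}^n K(A_i)$ of $q$, which by the order-$n$ hypothesis (equivalently: this tensor product over $K(A)$ is a field of dimension $\ell^n$) equals $\ell^n$. Put $N:=\ker q$. From $q=\phi_i\circ\rho_i$ one gets $N_i:=\ker\rho_i\subseteq N$ with $N/N_i\cong\ker\phi_i=\langle P_i\rangle$ of order $\ell$, while $\bigcap_{i=1}^n N_i$ is the kernel of the canonical closed immersion $\bar A\hookrightarrow A_1\times_K\cdots\times_K A_n$ and hence is $(0)$. Counting orders, the induced injection $N\hookrightarrow\bigoplus_{i=1}^n N/N_i=\bigoplus_{i=1}^n\langle P_i\rangle$ is an isomorphism of $G_K$-modules; since each $P_i$ is $K$-rational, $G_K$ acts trivially on the right-hand side, so $N\cong(\mathbb Z/\ell\mathbb Z)^n$ with trivial $G_K$-action, and in particular $N\subseteq\bar A[\ell]$.

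Because $N\subseteq\ker[\ell]_{\bar A}$, the multiplication map $[\ell]_{\bar A}$ factors through $q$, giving an isogeny $\alpha\colon A\to\bar A$ with $\alpha\circ q=[\ell]_{\bar A}$; composing with $q$ and using that $q$ is an epimorphism yields $q\circ\alpha=[\ell]_A$. Multiplicativity of degrees gives $\deg\alpha=\ell^{2g-n}$, and the relation $q\circ\alpha=[\ell]_A$ forces $\ker\alpha\subseteq A[\ell](\overline K)$, an $\mathbb F_\ell$-subspace of dimension $2g-n$. Furthermore $q(\alpha(x))=[\ell]_A(x)=0$ for every $x\in A[\ell](\overline K)$, so $\alpha$ carries $A[\ell](\overline K)$ into $N$, and since $\dim\alpha(A[\ell](\overline K))=2g-(2g-n)=n=\dim N$, it does so surjectively. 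Thus $\alpha$ induces an isomorphism of $G_K$-modules $A[\ell](\overline K)/\ker\alpha\xrightarrow{\ \sim\ }N$.

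Finally, $\ker\alpha$ is a $G_K$-invariant subspace of $A[\ell](\overline K)$, so by the semisimplicity hypothesis it admits a $G_K$-invariant complement $W$, i.e.\ $A[\ell](\overline K)=\ker\alpha\oplus W$. The restriction to $W$ of the quotient map is an isomorphism of $G_K$-modules $W\xrightarrow{\ \sim\ }A[\ell](\overline K)/\ker\alpha\cong N\cong(\mathbb Z/\ell\mathbb Z)^n$, so $W$ is a $G_K$-submodule with trivial action isomorphic to $(\mathbb Z/\ell\mathbb Z)^n$, and the inclusion $W\hookrightarrow A[\ell](\overline K)$ is the sought injection. The step I expect to demand the most care is the construction of $\bar A$: one must check that the fibre product of the $\phi_i$ is genuinely an abelian variety over $K$ and, above all, that the order-$n$ hypothesis is exactly what makes $\deg q=\ell^n$. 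Once that is secured, the factorisation of $[\ell]_{\bar A}$, the dimension bookkeeping and the single use of semisimplicity are purely formal.
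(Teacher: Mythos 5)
Your proof is correct, but it takes a genuinely different route from the paper's. The paper works entirely inside the $\mathbb F_\ell$-vector space $A[\ell](\overline K)$: it sets $H_i:=\phi_i(A_i[\ell](\overline K))=\ker\hat\phi_i$, proves by induction on $|J|$ that $\dim H_J=2g-|J|$ for $H_J:=\bigcap_{j\in J}H_j$ (using the linear disjointness of the $K(A_i)$ at each inductive step), then invokes semisimplicity \emph{once per index $i$} to produce a $G_K$-invariant line $W_i=\langle Q_i\rangle$ complementing $H_I$ inside $H_{I\setminus\{i\}}$, and finally checks by hand that the $Q_i$ are $K$-rational and linearly independent. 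You instead externalize the construction into an auxiliary abelian variety $\bar A=A_1\times_A\cdots\times_A A_n$, read the order-$n$ hypothesis as the statement that its generic fibre over $A$ is a single field of degree $\ell^n$ (hence $\bar A$ is irreducible and $\deg q=\ell^n$), and obtain the trivial module $N=\ker q\cong\bigoplus_i\langle P_i\rangle$ almost for free. Transporting $N$ to a \emph{quotient} of $A[\ell](\overline K)$ via the complementary isogeny $\alpha$ with $\alpha\circ q=[\ell]_{\bar A}$, you then need only a \emph{single} application of semisimplicity to split $\ker\alpha$ off. The two arguments are really dual descriptions of the same phenomenon: your $\ker\alpha$ is the paper's $H_I=\bigcap_i H_i$, and your degree bookkeeping for $q$ plays the role of the paper's dimension lemma, with the order-$n$ hypothesis entering at the same point (ruling out $H_{J'}\subseteq H_j$ in the paper, ensuring $\bigotimes K(A_i)$ is a field of dimension $\ell^n$ in yours). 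Your version buys conceptual economy (one use of Maschke, the triviality of the $G_K$-action is visible from the construction rather than argued pointwise), at the cost of having to verify that $\bar A$ is an abelian variety, that $q$ and $\alpha$ are separable isogenies of the stated degrees, and that $\bigcap N_i=0$; you flag this correctly yourself, and all of these checks do go through as you sketch them.
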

\begin{proof}
Let $\{P_{1,i},...,P_{2g,i}\}$ be a basis of $A_{i}[\ell](\overline{K})$ such that $\mathrm{ker}(\phi_{i})=<P_{1,i}>$.
Define
\begin{equation}
H_{i}:=\phi_{i}(\text{Span}\{P_{2,i},P_{3,i},...,P_{2g,i}\})=\phi_{i}(A_{i}[\ell](\overline{K})). 
\end{equation}
Note that $H_{i}$ is $G_{K}$-invariant, since it is the kernel of the dual isogeny $\hat{\phi}_{i}$. 
Let $J$ be any nonempty subset of $I:=\{1,2,...,n\}$. We define 
\begin{equation}
H_{J}:=\bigcap_{j\in{J}}H_{j}. 
\end{equation}
Thus if $J_{1}\subseteq{J_{2}}$ then 
\begin{equation}
H_{J_{1}}\supseteq{H_{J_{2}}}. 
\end{equation}
Note that any $H_{J}$ is again $G_{K}$-invariant, since it is an intersection of $G_{K}$-invariant subspaces. 

We will start by proving the following
\begin{lemma}\label{DimensionLemma2}
Let $J\subseteq{I}=\{1,..,n\}$ and consider the subgroup $H_{J}$. Then
\begin{equation}
\mathrm{dim}(H_{J})=2g-\#{J}.
\end{equation}
\end{lemma}
\begin{proof}
We will prove this by induction on the order of $J$. For $J=\{i\}$, we have $\mathrm{dim}(H_{i})=2g-1$ by definition of the $H_{i}$. Suppose that the statement is true for subsets of order $k$ and let $J$ be a subset of order $k+1$. Consider any $j\in{J}$ and let $J'=J\backslash\{j\}$. We consider two cases:
\begin{enumerate}
\item $H_{J'}\subseteq{H_{j}}$,
\item $H_{J'}\subsetneq{H_{j}}$.
\end{enumerate}
Suppose that $H_{J'}\subseteq{H_{j}}$. We then have the following induced injections of function fields
\begin{equation}
K(A)\rightarrow{K(A_{j})}\rightarrow{K(A/H_{J'})}\rightarrow{K(A)}.
\end{equation} 
Note that $K(A/H_{J'})/K(A)$ has degree $\ell^{|J'|}$ by the induction hypothesis. Note also that the product 
\begin{equation}
\prod_{i\in{J}}K(A_{j})
\end{equation}
is contained in $K(A/H_{J'})$ and has degree $\ell^{|J|}$. But $\ell^{|J|}>\ell^{|J'|}$, yielding a contradiction. We thus conclude that $H_{J'}\subsetneq{H_{j}}$. There is then an $x\in{H_{J'}}$ such that $x\notin{H_{j}}$. But then $<x>+H_{j}=A[\ell](\overline{K})$, since $H_{j}$ is $2g-1$-dimensional. We then also have that $H_{J}+H_{j}=A[\ell](\overline{K})$, since $<x>\subseteq{H_{J}}$. Applying the inclusion/exclusion formula, 
 we obtain
\begin{equation}
\mathrm{dim}(H_{J})=2g-\mathrm{dim}(H_{j})+\mathrm{dim}(H_{J'})=2g-(2g-1)+\mathrm{dim}(H_{J'})=1+\mathrm{dim}(H_{J'}).
\end{equation}
This then gives $\mathrm{dim}(H_{J})=1+\mathrm{dim}(H_{J'})=1+2g-|J'|=2g-|J|$, as desired. By induction, we conclude that the formula holds for all subsets of $I$, yielding the Lemma.
\end{proof}

Now consider the inclusions $H_{I\backslash{i}}\supset{H_{I}}$, where $I=\{1,...,n\}$. By applying the semisimplicity assumption to 
every inclusion, we obtain $G_{K}$-invariant subspaces $W_{i}$ such that 
\begin{equation}\label{ConstructionEquation}
H_{I}\oplus{W_{i}}=H_{I\backslash{i}}.
\end{equation}
By Lemma \ref{DimensionLemma2}, we then find that $\mathrm{dim}(W_{i})=1$ and we can thus write $W_{i}=<Q_{i}>$ for $Q_{i}\in{A[\ell](\overline{K})}$. Note that $W_{i}\cap{H_{i}}=(1)$ for every $i$. 
\begin{lemma}
The $Q_{i}$ are $K$-rational.
\end{lemma}
\begin{proof}
Consider the points $Q'_{i}:=\hat{\phi}_{i}(Q_{i})\in{A_{i}[\ell](\overline{K})}$. They are $K$-rational, since they are in the kernel of $\phi_{i}$. We thus find that $\sigma(Q_{i})-Q_{i}\in\mathrm{ker}(\hat{\phi}_{i})=H_{i}$. But the $W_{i}$ are invariant, so $\sigma(Q_{i})-Q_{i}\in{W_{i}}\cap{H_{i}}=(1)$. This yields $\sigma(Q_{i})=Q_{i}$, as desired. 
\end{proof}

\begin{lemma}
The $Q_{i}$ are linearly independent.
\end{lemma}
\begin{proof}
We will prove this by induction on $i$. For $i=1$, we have that $Q_{1}$ is not trivial by construction (see Equation \ref{ConstructionEquation}). We now suppose that the Lemma is true for $i$ and we will show that $Q_{i+1}$ is independent of the other $Q_{k}$ for $k\leq{i}$. Suppose for a contradiction that
\begin{equation}
Q_{i+1}=\sum_{j=1}^{i}c_{j}Q_{j}.
\end{equation}
By construction, we find that $Q_{j}\in{H_{i+1}}$ for every $j\leq{i}$. But then $Q_{i+1}\in{H_{i+1}}$, a contradiction. By induction, we conclude that the $Q_{i}$ are linearly independent.

\end{proof}
We now have a set $\{Q_{i}\}$ of $n$ linearly independent, $K$-rational points in $A[\ell](\overline{K})$. These give a natural injection of $G_{K}$-modules $(\mathbb{Z}/\ell\mathbb{Z})^{n}\rightarrow{A[\ell](\overline{K})}$, which finishes the proof.
\end{proof}

We note that the condition on the Galois group in Theorem \ref{MainTheorem2} is necessary by the following counterexample. Let $\phi_{1}:E'_{1}\rightarrow{E_{1}}$ and $\phi_{2}:E'_{2}\rightarrow{E_{2}}$ be two isogenies of degree $\ell$ whose kernels are generated by $K$-rational points. Suppose furthermore that $E_{1}$ and $E_{2}$ contain no $K$-rational $\ell$-torsion points. 
We then have the isogeny graph
\begin{center}
\begin{tikzcd}[every arrow/.append style={dash}]
E'_{1}\times{E_{2}}\ar[rightarrow]{r}{(\phi_{1},id)} & E_{1}\times{E_{2}}\ar[leftarrow]{r}{(id,\phi_{2})} & E_{1}\times{E'_{2}}. 
\end{tikzcd}
\end{center}

Note that by construction, $(E_{1}\times{E_{2}})[\ell]$ does not contain any $K$-rational points. More explicitly, let $\ell=3$ and $K:=\mathbb{Q}$. We then have the following universal families
\begin{align*}
E_{3}&:y^2+wxy+vy=x^3,\\
E'_{3}&:y^2+wxy+vy=x^3-5wvx-v(w^3+7v).
\end{align*}
Here $P:=(0,0)\in{E_{3}(\mathbb{Q}(w,v))}$ is a point of order three and $E'_{3}=E_{3}/<P>$. Taking values $(v_{0},w_{0})\in\mathbb{Q}^{2}$ of $(v,w)$ such that $E'_{3}$ has no rational three torsion then yields the desired curve. For instance, $v_{0}=2$ and $w_{0}=1$ work. Taking $E'_{1}=E'_{2}$ and $E_{1}=E_{2}=E'_{1}/<P>$ then yields the explicit counterexample over $\mathbb{Q}$. The formulas for the universal families can be found in \cite[Page 15]{Paul3}.

\bibliographystyle{alpha}
\bibliography{bibfiles}{}

\end{document}